\documentclass[]{theclass}
\usepackage{graphicx, xfrac, lineno, float, subcaption, tasks, comment, xcolor, booktabs, multirow}
\usepackage[normalem]{ulem}
\usepackage{datetime}
\usepackage{colortbl}
\usepackage{enumerate}

\settasks{
	counter-format=(tsk[r]),
	label-width=4ex
}

\begin{document}

\begin{frontmatter}

\titledata{Extending perfect matchings to \\ Hamiltonian cycles in line graphs}{}           

\authordata{Mari\'{e}n Abreu}
{Dipartimento di Matematica, Informatica ed Economia\\ Universit\`{a} degli Studi della Basilicata, Italy}
{marien.abreu@unibas.it}
{$^{\dagger}$The research that led to the present paper was partially supported by a grant of the group GNSAGA of INdAM.}

\authordata{John Baptist Gauci}
{Department of Mathematics \\ University of Malta, Malta}
{john-baptist.gauci@um.edu.mt}
{}

\authordata{Domenico Labbate $^{\dagger}$}
{Dipartimento di Matematica, Informatica ed Economia\\ Universit\`{a} degli Studi della Basilicata, Italy}
{domenico.labbate@unibas.it}
{}

\authordata{Giuseppe Mazzuoccolo}
{Dipartimento di Informatica\\ Universit\`{a} degli Studi di Verona, Italy}
{giuseppe.mazzuoccolo@univr.it}
{}

\authordata{Jean Paul Zerafa}
{Dipartimento di Scienze Fisiche, Informatiche e Matematiche, \\ Universit\`{a} degli Studi di Modena e Reggio Emilia, Italy}
{jeanpaul.zerafa@unimore.it}
{}

\keywords{Line graph, Hamiltonian cycle, perfect matching.}
\msc{05C45, 05C70, 05C76.}

\begin{abstract}
A graph admitting a perfect matching has the Perfect-Matching-Hamiltonian property (for short the PMH-property) if each of its perfect matchings can be extended to a Hamiltonian cycle. In this paper we establish some sufficient conditions for a graph $G$ in order to guarantee that its line graph $L(G)$ has the PMH-property. In particular, we prove that this happens when $G$ is (i) a Hamiltonian graph with maximum degree at most $3$, (ii) a complete graph, or (iii) an arbitrarily traceable graph. Further related questions and open problems are proposed along the paper.

\end{abstract}

\end{frontmatter}

\section{Introduction}

The main property studied in this paper is related to two of the most studied concepts in graph theory: perfect matchings and Hamiltonian cycles. Let us recall that a  \textit{perfect matching} of a graph $G$ is a set of independent edges of $G$ that covers all the vertices in $G$, and a \textit{Hamiltonian cycle} is a cycle passing through all vertices of $G$. If such a cycle exists then $G$ is said to be \emph{Hamiltonian}.

The \textit{complete graph} on $n$ vertices, denoted by $K_n$, is the graph in which every two vertices are adjacent. For any graph $G$, $K_{G}$ denotes the complete graph on the same vertex set $V(G)$ of $G$. Let $G$ be of even order. A perfect matching of $K_{G}$ is said to be a \emph{pairing} of $G$. In \cite{ThomassenEtAl}, the authors say that a graph $G$ has the \emph{Pairing-Hamiltonian property} (for short the PH-property) if every pairing $M$ of $G$ can be extended to a Hamiltonian cycle $H$ of $K_{G}$ in which $E(H)-M\subseteq E(G)$, where $E(H)$ is the set of edges of $H$. Amongst other results, the authors show that the only cubic graphs having the PH-property are $K_{4}$, the complete bipartite graph $K_{3,3}$ and the 3-cube.
Adopting a similar terminology, we say that a graph $G$ admitting a perfect matching has the \emph{Perfect-Matching-Hamiltonian property} (for short the PMH-property) if every perfect matching of $G$ can be extended to a Hamiltonian cycle of $G$. We only consider graphs admitting a perfect matching to avoid trivial cases. This has already been studied in literature, and graphs having this property are also known as $F$-Hamiltonian, where $F$ is a perfect matching (see \cite{Haggkvist, Yang}).
Henceforth, if a graph has the Perfect-Matching-Hamiltonian property, we say that it is a PMH-graph or simply that it is PMH. Note that since every perfect matching of $G$ is a pairing of $G$, clearly, a graph having the PH-property is also a PMH-graph.

In the 1970s, Las Vergnas \cite{LasVergnas} (see Theorem \ref{Theorem LasVergnas}) and H\"{a}ggkvist \cite{Haggkvist} (see Theorem \ref{theorem haggkvist}) gave two sufficient Ore-type conditions for a graph to be PMH.

\begin{theorem}\cite{LasVergnas}\label{Theorem LasVergnas}
Let $G$ be a bipartite graph, with partite sets $U$ and $V$, such that $|U|=|V|=\frac{n}{2}\geq 2$. If for each pair of non-adjacent vertices $u\in U$ and $v\in V$ we have $deg(u)+deg(v)\geq \frac{n}{2}+2$, then $G$ is PMH.
\end{theorem}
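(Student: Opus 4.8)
The plan is to argue by contradiction through a maximal alternating structure, in the spirit of Ore's classical proof but carried out relative to the prescribed matching. Fix a perfect matching $M$ of $G$, set $k=\tfrac n2=|U|=|V|$, and suppose for contradiction that $M$ cannot be extended to a hamiltonian cycle. I focus attention on \emph{$M$-alternating} cycles and paths, i.e.\ those whose edges alternate between $M$ and $E(G)\setminus M$; since $G$ is bipartite, every such cycle meets $U$ and $V$ alternately and uses a whole number of $M$-edges. A hamiltonian cycle containing $M$ is exactly a spanning $M$-alternating cycle, so it suffices to prove that a longest $M$-alternating cycle $C$ is spanning.

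The engine of the argument is an \emph{insertion lemma}. Assuming $C$ is not spanning, the set $R=V(G)\setminus V(C)$ is itself saturated by $M$-edges lying inside $R$, because $M$ is perfect and $C$ uses whole $M$-edges. Let $Q=q_0q_1\cdots q_{2m+1}$ be a longest $M$-alternating path with all vertices in $R$ and with both end-edges $q_0q_1,\,q_{2m}q_{2m+1}\in M$, say $q_0\in U$ and $q_{2m+1}\in V$. The key point is that if some non-$M$ edge of $C$ with endpoints $c'\in V$, $c''\in U$ satisfies $q_0c'\in E(G)$ and $q_{2m+1}c''\in E(G)$, then replacing $c'c''$ by the segment $c'\,q_0\,q_1\cdots q_{2m+1}\,c''$ produces an $M$-alternating cycle strictly longer than $C$, a contradiction. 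Crucially, both new edges $c'q_0$ and $q_{2m+1}c''$ join opposite sides of the bipartition, so \emph{no parity obstruction} arises; this is precisely why the cycle formulation, rather than a hamiltonian-path formulation, is the right vehicle (a direct path-closure fails on parity grounds).

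To convert this into degree information, note that on $C$ every $V$-vertex is the $V$-endpoint of exactly one non-$M$ edge and every $U$-vertex is the $U$-endpoint of exactly one non-$M$ edge. Hence $N(q_0)\cap V(C)$ and $N(q_{2m+1})\cap V(C)$ index two families of non-$M$ edges of $C$, and by the insertion lemma no non-$M$ edge lies in both, so $|N(q_0)\cap V(C)|+|N(q_{2m+1})\cap V(C)|\le L$, where $L$ is the number of $M$-edges of $C$. Combining this with the maximality of $Q$ inside $R$ (which forces every $R$-neighbour of $q_0$ or of $q_{2m+1}$ to lie on $Q$) and with $L+(m+1)\le k$, one bounds $\deg(q_0)+\deg(q_{2m+1})$ from above and compares it, when $q_0q_{2m+1}\notin E(G)$, against the hypothesis $\deg(q_0)+\deg(q_{2m+1})\ge k+1$.

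The delicate point, and the step I expect to be the main obstacle, is that this comparison sits exactly at the threshold: in the extremal configuration (for example $R$ a single $M$-edge with $L=k-1$) the inequalities become equalities rather than a contradiction, so the raw count does not finish. Ruling out the extremal pattern requires extra global structure. Here I would contract each $M$-edge to a single vertex and direct each non-$M$ edge $uv$ (with $u\in U$, $v\in V$) from the class of $u$ to the class of $v$, obtaining a digraph $D$ on $k$ vertices in which a directed hamiltonian cycle corresponds exactly to the desired spanning $M$-alternating cycle. The hypothesis makes $D$ strongly connected: a cut $(S,T)$ with no arc from $S$ to $T$ would give non-adjacent $u\in U$, $v\in V$ with $\deg(u)\le|S|$ and $\deg(v)\le|T|$, so $\deg(u)+\deg(v)\le|S|+|T|=k$, contradicting $\ge k+1$. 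Strong connectivity together with the near-threshold degree condition (a sharp Woodall/Meyniel-type hypothesis for strongly connected digraphs) is what excludes the rigid extremal interleaving and forces $C$ to be spanning, completing the proof.
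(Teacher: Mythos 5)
The paper does not prove this statement --- it is quoted from Las Vergnas's thesis --- so there is no in-paper argument to compare against, and I can only assess your proposal on its own terms. Your closing reduction is the right idea, but the proof does not close, and you have in fact correctly diagnosed why: everything ``sits exactly at the threshold.'' The underlying reason is that the statement as printed is false. Take $G=K_{3,3}$ with parts $U=\{a,b,c\}$ and $V=\{x,y,z\}$ and delete the two independent edges $ax$ and $by$. The only non-adjacent pairs are $(a,x)$ and $(b,y)$, each with degree sum $2+2=4=\frac{n}{2}+1$, so the hypothesis holds. But the perfect matching $M=\{ay,bx,cz\}$ does not extend: each of $a,y,b,x$ has a unique non-$M$ neighbour, which forces all four edges $az$, $cy$, $bz$, $cx$ into any extension and gives $z$ degree three. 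Las Vergnas's actual hypothesis is $\deg(u)+\deg(v)\ge\frac{n}{2}+2$; the printed version is off by one, and that missing unit is exactly the slack that both your counting argument and your appeal to Woodall/Meyniel lack.

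Concretely: in your contracted digraph $D$ on $k=\frac{n}{2}$ vertices (one vertex per $M$-edge $u_iv_i$, an arc $i\to j$ whenever $u_iv_j\in E(G)$), one has $d^+(i)=\deg(u_i)-1$ and $d^-(j)=\deg(v_j)-1$, so the absence of an arc $i\to j$ yields only $d^+(i)+d^-(j)\ge k-1$ under the printed hypothesis. Woodall's theorem needs $\ge k$, and Meyniel's needs total degree $\ge 2k-1$ where you can only get $2k-2$, so neither applies --- and no structural refinement of the alternating-cycle argument can rescue a false statement. The constructive takeaway is that under the corrected hypothesis $\deg(u)+\deg(v)\ge\frac{n}{2}+2$ your digraph reduction alone is a complete proof: one gets $d^+(i)+d^-(j)\ge k$ for every ordered pair with no arc from $i$ to $j$, Woodall's theorem yields a directed hamiltonian cycle $i_1\to\cdots\to i_k\to i_1$, and the closed walk $v_{i_1},u_{i_1},v_{i_2},u_{i_2},\ldots,v_{i_k},u_{i_k},v_{i_1}$ is a hamiltonian cycle of $G$ containing $M$. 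The longest-alternating-cycle machinery in your first three paragraphs then becomes unnecessary.
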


\begin{theorem}\cite{Haggkvist}\label{theorem haggkvist}
Let $G$ be a graph, such that the order of $G$ is even and at least $4$. If for each pair of non-adjacent vertices $u$ and $v$ we have $deg(u)+deg(v)\geq n+1$, then $G$ is PMH.
\end{theorem}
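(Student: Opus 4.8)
The plan is to argue by contradiction through an edge--maximal counterexample, in the spirit of the classical proofs of Ore--type hamiltonicity results, but carefully adapted so that the fixed perfect matching survives every rerouting. Fix a perfect matching $M$ of $G$ and suppose $M$ does not extend to a hamiltonian cycle. Among all graphs on $V(G)$ that contain $G$, admit $M$ as a perfect matching, and in which $M$ does not extend to a hamiltonian cycle, choose one, say $G^{*}$, with the maximum number of edges; adding edges only raises degrees, so $G^{*}$ still satisfies the hypothesis $\deg(u)+\deg(v)\ge n+1$ for non-adjacent $u,v$. If $G^{*}$ is complete then $G^{*}=K_n$, and since $n$ is even and at least $4$ one checks directly that every perfect matching of $K_n$ lies in a hamiltonian cycle, a contradiction; hence there exist non-adjacent vertices $u,v$.

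Next I would exploit maximality. By the choice of $G^{*}$, the graph $G^{*}+uv$ contains a hamiltonian cycle through $M$, and this cycle must use $uv$ (otherwise $G^{*}$ itself would contain it). Deleting $uv$ leaves a hamiltonian path $P\colon w_1w_2\cdots w_n$ with $w_1=u$, $w_n=v$ and $M\subseteq E(P)$. The decisive structural observation is that a hamiltonian cycle containing a perfect matching is necessarily alternating, since at every vertex exactly one of its two cycle--edges is its (unique) matching edge; applied to $P$ this forces $M=\{w_1w_2,\,w_3w_4,\,\dots,\,w_{n-1}w_n\}$, i.e. $M$ consists precisely of the odd--indexed edges of $P$. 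In particular the two end--edges $w_1w_2$ and $w_{n-1}w_n$ lie in $M$.

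Then comes the rerouting. A crossover at position $i$, replacing the path edge $w_iw_{i+1}$ by the two chords $vw_i$ and $uw_{i+1}$, produces a hamiltonian cycle of $G^{*}$; for it to still contain $M$ the discarded edge $w_iw_{i+1}$ must not lie in $M$, that is, $i$ must be \emph{even}. Likewise one may perform $M$--preserving P\'osa rotations, but only at even positions, so the endpoint $u$ can be moved only to vertices whose incident end--edge is a matching edge. Writing $R$ for the set of endpoints reachable from $u$ by such $M$--rotations (with $v$ held fixed), the maximality of $G^{*}$ shows that no $x\in R$ is adjacent to $v$, since the closing edge $xv$ would complete a hamiltonian cycle containing $M$; thus $R\cap N(v)=\varnothing$.

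Finally, the heart of the matter --- and the step I expect to be the main obstacle --- is the counting. The plain Ore estimate shows only that the two endpoints have at least two common crossover positions, with no control on their parity; because the forbidden positions are exactly the matching edges, a crude inclusion--exclusion restricted to the even positions is too lossy and already fails for large $n$. This is precisely why the hypothesis reads $\deg(u)+\deg(v)\ge n+1$ rather than Ore's $\ge n$: the single surplus unit has to absorb the parity restriction. I would therefore run the rotation argument to enlarge $R$ and track neighbourhoods together with their parities, establishing through a P\'osa--type inequality that $|N(R)|$ is forced to be large while $R\cap N(v)=\varnothing$; combining this with $\deg(u)+\deg(v)\ge n+1$ yields the contradiction. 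Making the parity bookkeeping tight, so that exactly the extra unit of degree suffices to guarantee an $M$--respecting crossover, is the delicate part of the proof.
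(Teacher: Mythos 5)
The paper does not prove this statement: it is quoted from H\"aggkvist's 1979 article and used only as background, so there is no in-paper argument to compare yours against and your attempt must stand on its own. The parts you actually carry out are sound: the edge-maximal counterexample $G^{*}$, the fact that $K_n$ ($n$ even, $n\ge 4$) contains every perfect matching in a hamiltonian cycle, the observation that a hamiltonian cycle through a perfect matching must be $M$-alternating (so that, since $uv\notin E(G^{*})$ while $M\subseteq E(G^{*})$, the path $P=w_1\cdots w_n$ carries $M$ exactly on its odd-indexed edges), the parity restriction on crossovers, and the claim $R\cap N(v)=\varnothing$ for $M$-preserving rotations are all correct.

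The gap is the step you yourself flag: the counting that converts $\deg(u)+\deg(v)\ge n+1$ into the existence of an $M$-respecting crossover is never carried out, and that step is the entire content of the theorem. As you observe, the plain Ore pigeonhole yields only two common crossover positions with no control on parity; if both land on matching edges the argument collapses, and the naive refinement (for each $k$ at most one of $v\sim w_{2k}$ and $u\sim w_{2k+1}$ holds) only caps $\deg(u)+\deg(v)$ at roughly $3n/2$, far above $n+1$, so no contradiction follows. The proposed repair --- $M$-preserving P\'osa rotations plus ``a P\'osa-type inequality that $|N(R)|$ is forced to be large'' --- is a hope rather than a proof, and it is not clear it can be made to work: $M$-preserving rotations pivot only on chords $w_1w_j$ with $j$ odd, so the reachable endpoint set $R$ is confined to even positions and may remain small; moreover P\'osa's lemma bounds $|N(R)|$ from \emph{above} in terms of $|R|$, and extracting a lower bound on $|R|$ from it requires an expansion hypothesis that an Ore-type condition on non-adjacent pairs does not obviously supply (which is why the classical Ore proof avoids rotations altogether). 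To close the argument you would need further rerouting moves (double crossovers, or exchanges of matching-incident configurations) together with a counting scheme in which the single surplus unit in $n+1$ genuinely absorbs the parity loss; until that is written down, the theorem is not proved.
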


Later on, in 1993, Ruskey and Savage \cite{RuskeySavage} asked whether every matching in the \linebreak$n$-dimensional hypercube $Q_{n}$, for $n\geq 2$, extends to a Hamiltonian cycle of $Q_{n}$. This was in fact shown to be true for $n=2,3,4$ (see \cite{Fink}) and for $n=5$ (see \cite{WangZhao}). Moreover, Fink \cite{Fink} also showed that $Q_{n}$ has the PH-property. This clearly implies that $Q_n$ is a PMH-graph, and thus answers a conjecture made by Kreweras (see \cite{kreweras}). Finally, Amar, Flandrin and Gancarzewicz in \cite{AFG} gave a degree sum condition for three independent vertices under which every matching of a graph lies in a Hamiltonian cycle. More results on PMH-graphs can be found in the paper by Yang \cite{Yang}.

The class of line graphs of connected graphs is a compelling class of graphs for which a great deal is known regarding Hamiltonicity and the existence of perfect matchings. Indeed, it is well-known that if $G$ is connected and has an even number of edges, then its line graph admits a perfect matching (see Section \ref{PMH-linegraphs} for more details), and so, in the sequel we shall tacitly assume that $G$ is connected and of even size. Furthermore, Hamiltonicity of a line graph $L(G)$ is another extensively studied property: a necessary and sufficient condition for Hamiltonicity in $L(G)$ is proved in \cite{HararyNashWilliams}, while Thomassen conjectured in \cite{Thomassen} that every $4$-connected line graph is Hamiltonian.

Along these lines, we here deal with the line graph of a graph $G$ and search for sufficient conditions on $G$ which result in $L(G)$ being PMH. We will prove that $L(G)$ is PMH in all of the following cases:
\begin{itemize}
\item $G$ is Hamiltonian with maximum degree $\Delta(G)$ at most $3$ (Theorem \ref{Theorem MaxDeg3}),
\item $G$ is a complete graph (Theorem \ref{Theorem CompleteGraphs}), and
\item $G$ is arbitrarily traceable from some vertex (Theorem \ref{randomeulerianPMH}).
\end{itemize}

In Section \ref{section kmm}, we shall also discuss the line graph of complete bipartite graphs. Further related results and open problems regarding graphs which are hypohamiltonian, Eulerian or with large maximum degree are discussed along the paper.
 
\subsection{Definitions and Notation}
All graphs considered in this paper are finite, simple (without loops or multiple edges) and connected. Most of our terminology is standard, and we refer the reader to \cite{BM} for further definitions and notation not explicitly stated.

Unless otherwise stated, we let the order of $G$ be $n$ and denote the set of vertices of $G$ by $\{v_1,v_2,\ldots,v_n\}$. For a graph $G$ and $N\subseteq E(G)$, $G-N$ represents the resulting graph after deleting the edges in $N$ from $G$.

A \textit{walk} (of length $k$) in a graph $G$ is a sequence $u_1,\ldots,u_{k+1}$ of vertices of $G$ with corresponding edge set $\{u_{i}u_{i+1}:i\in [k]\}$. If $u_1=u_{k+1}$, the walk is said to be \textit{closed} and is denoted by $(u_{1}, \ldots, u_{k+1}=u_{1})$. A \textit{path} on $t$ vertices, denoted by $P_{t}$, is a walk of length $t-1$ in which all the vertices and edges are distinct. We may also refer to $P_{t}$ as a $t$-\emph{path}. A \textit{cycle} of length $k$ is a closed walk of length $k$ in which all the vertices are distinct, except for the first and last. For simplicity, we denote a cycle of length $k$ by $(u_{1}, \ldots, u_{k})$, instead of $(u_{1}, \ldots, u_{k+1}=u_{1})$.

A \textit{tour} of $G$ is a closed walk having no repeated edges. A graph $G$ is \emph{Eulerian} if there is a tour that traverses all the edges of $G$, called an \emph{Euler tour}.
A \textit{dominating tour} of $G$ is a tour in which every edge of $G$ is incident with at least one vertex of the tour. In particular, a dominating tour which is 2-regular is referred to as a {\em dominating cycle}. In general, if a walk does not pass through some vertex $v$, we say that $v$ is \textit{untouched} or {\em uncovered}.

A \textit{clique} in a graph $G$ is a complete subgraph of $G$, and so $K_{n}$ may sometimes be referred to as an $n$-\emph{clique}.

\section{Line graphs of graphs with small maximum degree}\label{PMH-linegraphs}

The \textit{line graph} $L(G)$ of a graph $G$ is the graph whose vertices correspond to the edges of $G$, and two vertices of $L(G)$ are adjacent if the corresponding edges in $G$ are incident to a common vertex. For some edge $e\in E(G)$, we refer to the corresponding vertex in $L(G)$ as $e$, for simplicity, unless otherwise stated. A \textit{clique partition} of a graph $G$ is a collection of cliques of $G$ in which each edge of $G$ occurs exactly once. For any $v\in V(G)$, let $Q_{v}$ be the set of all the edges incident to $v$. Clearly, $Q_{v}$ induces a clique in $L(G)$ and $\mathcal{Q}=\{Q_{v}: v \in V(G) \text{ with degree at least }2\}$ is a clique partition of $L(G)$. We say that $\mathcal{Q}$ is the \textit{canonical clique partition} of $L(G)$. In the sequel, we shall refer to $Q_{v_{i}}$ simply as $Q_{i}$ and in order to avoid trivial cases, from now on we always assume that $G$ is a connected graph of order larger than $2$. In what follows, we shall also say that a clique $Q'\in\mathcal{Q}$ is intersected by a set of edges $N$ of $L(G)$, and by this we mean that $E(Q')\cap N\neq \emptyset$.

For a graph $F$, an \textit{$F$-decomposition} of $G$ is a collection of subgraphs of $G$ whose edges form a partition of $E(G)$ such that each subgraph in the collection is isomorphic to $F$. In general, it is not hard to show that every connected graph $G$ with $|E(G)|$ even has a $P_{3}$-decomposition. This is equivalent to saying that $L(G)$ has a perfect matching (see also Corollary 3 in \cite{Sumner}): indeed there is a natural bijection between the paths in a $P_{3}$-decomposition of $G$ and the edges of the corresponding perfect matching $M$ of $L(G)$, with the two edges in a $P_{3}$ corresponding to the two end-vertices of the respective edge in $M$. Since we are interested in line graphs which are PMH, a necessary condition is that $L(G)$ is Hamiltonian. Harary and Nash-Williams in \cite{HararyNashWilliams} showed that $L(G)$ is Hamiltonian if and only if $G$ admits a dominating tour.
In particular, this implies that if $G$ is Hamiltonian or Eulerian, then, $L(G)$ is also Hamiltonian, but the converse is not necessarily true (see also \cite{Chartrand, HararyNashWilliams, Sedlacek}).

The following technical lemma is the main tool we use to prove Theorem \ref{Theorem MaxDeg3} as well as a series of related results contained in this section. It describes a necessary and sufficient condition to extend a given perfect matching to a Hamiltonian cycle in subcubic graphs.

\begin{lemma}\label{Lemma TechnicalUncoveredCorrespondance}
Let $G$ be a connected graph such that $\Delta(G)\leq 3$. A perfect matching $M$ of $L(G)$ can be extended to a Hamiltonian cycle if and only if there exists a dominating cycle $D$ of $G$ such that the vertices in $G$ untouched by $D$ correspond to a subset of cliques in $\mathcal{Q}$ not intersected by $M$, where $\mathcal{Q}$ is the canonical clique partition of $L(G)$.
\end{lemma}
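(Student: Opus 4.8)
The plan is to translate both the matching and the cliques into the language of the $P_{3}$--decomposition of $G$ associated with $M$. Recall that a perfect matching $M$ of $L(G)$ corresponds to a $P_{3}$--decomposition $\mathcal{P}$ of $G$, where the matching edge joining two edges $e,f$ of $G$ is recorded as the $2$--path of $\mathcal{P}$ centred at their common vertex. Under this dictionary, a clique $Q_{v}\in\mathcal{Q}$ is \emph{intersected by $M$} precisely when $v$ is the centre of a path of $\mathcal{P}$, and since $\Delta(G)\leq 3$ no vertex can be the centre of more than one such path. Two preliminary observations will be used throughout: (a) because $G$ is simple, any two adjacent vertices of $L(G)$ meet in a \emph{unique} vertex of $G$, so every edge of $L(G)$ has a well--defined \emph{transition vertex}; and (b) every vertex lying on a cycle $D$ of $G$ is incident with exactly two edges of $D$, hence, as $\Delta(G)\leq 3$, with at most one edge off $D$.

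For necessity, suppose $M$ extends to a hamiltonian cycle $H$ of $L(G)$. Reading $H$ as a cyclic sequence of all the edges of $G$ and recording, between consecutive entries, their transition vertex, one obtains a closed walk $W$ in $G$; the edges actually traversed by $W$ are those whose two neighbours in $H$ attach at different endpoints, and since each edge of $G$ occurs exactly once in $H$, no edge is traversed twice, so $W$ is a closed trail. As in \cite{HararyNashWilliams}, every edge of $G$ is incident with a transition vertex, and every transition vertex lies on $W$, so $W$ is dominating. Now the decisive point: in a closed trail every vertex has even degree, and here this degree is at most $\Delta(G)\leq 3$, so it is either $0$ or $2$; thus the edge set of $W$ is $2$--regular on its support, and being the edge set of a single closed trail it is a single cycle $D$. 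Finally, a vertex is untouched by $D$ exactly when it is a transition vertex of no edge of $H$, i.e.\ when $Q_{v}$ contains no edge of $H$; as $M\subseteq H$, such a $Q_{v}$ contains no edge of $M$ either, so $v$ is not a centre of $\mathcal{P}$, which is the required condition.

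For sufficiency, let $D$ be a dominating cycle whose untouched vertices are non--centres of $\mathcal{P}$, and let $C_{D}$ be the cycle in $L(G)$ formed by the edges of $D$ in their cyclic order. I would build $H$ by inserting every edge of $G$ off $D$ into $C_{D}$, each at a single transition. Classify each path of $\mathcal{P}$, with centre $c$ and edges $g,g'$, by how many of $g,g'$ lie on $D$: the case of neither lying on $D$ is impossible, since $c\in D$ would then have degree at least $4$; if both lie on $D$ they are the two $D$--edges at $c$ and are already consecutive in $C_{D}$; and if exactly one, say $g$, lies on $D$, then $g$ is a $D$--edge at $c$ while $g'$ is the unique off--$D$ edge at $c$, and one inserts $g'$ between the two $D$--edges at $c$ on the side of $g$, making $g,g'$ consecutive. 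Every off--$D$ edge is the non--$D$ member of its own path; its centre cannot be untouched, as untouched vertices are non--centres by hypothesis, so the centre lies on $D$. Hence each off--$D$ edge is inserted exactly once, at its centre, and by observation (b) each centre hosts at most one insertion. As these insertions occur at distinct transitions of $C_{D}$ and never separate a pair already realised by $D$, the result is a single cycle through all edges of $G$ containing $M$, i.e.\ the desired hamiltonian cycle of $L(G)$.

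The routine reductions aside, the crux is the sufficiency direction, and specifically the verification that the insertions dictated by $\mathcal{P}$ can be carried out simultaneously without conflict: one must check that no off--$D$ edge is forced into a transition already committed to an $M$--pair of two $D$--edges, and that two distinct off--$D$ edges are never forced into the same transition. Both are consequences of the degree bound $\Delta(G)\leq 3$ together with the standing hypothesis relating untouched vertices to non--centres, exactly as in the case analysis above; I expect this bookkeeping, rather than any single hard step, to be the main technical load. A minor point to dispatch separately is the treatment of vertices of degree $1$, which lie on no cycle and correspond to no clique of $\mathcal{Q}$: one assumes $\delta(G)\geq 2$, as holds in the intended applications, or handles such pendant edges directly within the clique at their unique neighbour.
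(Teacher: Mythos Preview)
Your argument is correct and essentially the same as the paper's. For necessity the paper likewise records the order in which $H_L$ visits the cliques of $\mathcal{Q}$ (your transition vertices) and uses $\Delta(G)\le 3$ to conclude this sequence has no repetitions, yielding the dominating cycle; for sufficiency the paper traverses the cliques $Q_i$ along $D$ with a three--case analysis (according to whether $E(Q_i)\cap M\neq\emptyset$ and whether $v_i\in D$) that is exactly your insertion procedure, phrased clique--by--clique rather than as local modifications of the base cycle $C_D$.
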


\begin{proof}
Let $M$ be a perfect matching of $L(G)$ which can be extended to a Hamiltonian cycle $H_{L}$ of $L(G)$. For some orientation of $H_{L}$, let $Q_{1},Q_{2}, \ldots, Q_{s}$ be the order in which $E(H_{L})$ intersects at least one edge of the cliques in $\mathcal{Q}$, where $s\in[n]$. Since $\Delta(G) \leq 3$, $\mathcal{Q}$ consists of 2-cliques and 3-cliques, implying that the sequence $Q_{1},Q_{2}, \ldots, Q_{s}$ does not have repetitions. We claim that $D=(v_1,v_2,\ldots,v_s)$ is a dominating cycle of $G$. Clearly, $D$ is a cycle, since consecutive cliques in the sequence $Q_{1},Q_{2}, \ldots, Q_{s}$ imply the existence of an edge between the corresponding two vertices in $D$. We then consider two cases. If every clique in $\mathcal{Q}$ is intersected by $E(H_{L})$, then $(v_1,v_2,\ldots,v_s)$ is a Hamiltonian cycle, since $s=n$. Therefore, consider the case when $\mathcal{Q}$ contains a clique, say $Q$, not  intersected by $E(H_{L})$. The edges of the other cliques in $\mathcal{Q}$ which are incident to a vertex in $Q$ must be intersected by $E(H_L)$, as otherwise the latter is not a Hamiltonian cycle of $L(G)$. Let these cliques be denoted by $Q_{j_{1}}, \ldots, Q_{j_{k}}$, for $k=2$ or $3$ and $j_{1},\ldots,j_{k}\in[s]$.
Let the corresponding vertices of $Q$ and $Q_{j_{1}}, \ldots, Q_{j_{k}}$, in $G$, be $v$ and $v_{j_{1}},\ldots,v_{j_{k}}$, respectively. Also, since $v\neq v_t$ for all $v_t$ in $D$, and $M$ is a perfect matching of $L(G)$, the vertices $v_{j_{1}},\ldots, v_{j_{k}}$ are in the cycle $D$ (not necessarily adjacent amongst themselves) and so the edges in $G$ having $v$ as an end-vertex have at least one end-vertex in $D$. Thus, since $v$ was arbitrary, $D$ is dominating. Moreover, every vertex in $G$ untouched by $D$ corresponds to a clique in $\mathcal{Q}$ not intersected by $E(H_{L})$, which is a subset of the cliques in $\mathcal{Q}$ not intersected by $M$.

Conversely, let $M$ be a perfect matching of $L(G)$ and let $D=(v_1,v_2,\ldots,v_s)$ be a dominating cycle in $G$, for some $s\leq n$, such that the untouched vertices correspond to a subset of the cliques in $\mathcal{Q}$ not intersected by $M$. Note that there exists a one-to-one mapping between the untouched vertices in $G$ and the unintersected cliques in $\mathcal{Q}$, which is not necessarily onto. We traverse the cliques in $\mathcal{Q}$ as follows. Let $Q$ be a clique in $\mathcal{Q}$, with corresponding vertex $v\in V(G)$. We consider three cases.

\textbf{Case 1:} $E(Q)\cap M\neq \emptyset$.\\
By our assumption, $v=v_i$ for some $i\in [s]$, and we traverse $Q$ ($=Q_{i}$) using the unique path joining $V(Q_{i-1})\cap V(Q_{i})$ and $V(Q_{i})\cap V(Q_{i+1})$ which contains $E(Q)\cap M$.

\textbf{Case 2:} $E(Q)\cap M=\emptyset$ and $v \in D$.\\
In this case, $v=v_j$ for some $j\in[s]$, and we traverse $Q$ ($=Q_j$) using the edge with end-vertices $V(Q_{j-1})\cap V(Q_{j})$ and $V(Q_{j})\cap V(Q_{j+1})$.

\textbf{Case 3:} $E(Q)\cap M=\emptyset$ and $v \not\in D$.\\
Since $M$ is a perfect matching, all the cliques in $\mathcal{Q}$ sharing a vertex with $Q$ (which must be triangles in this case) are intersected by $M$. These 3-cliques are traversed as in Case 1, and in this way the edges of $Q$ are not intersected.\\

We traverse all the cliques in $\mathcal{Q}$ in the above way and let the resulting sequence of edges be $H_{L}$. We claim that $H_{L}$ induces a Hamiltonian cycle of $L(G)$ containing $M$. By Case 1, $H_{L}$ contains $M$ and so every vertex of $L(G)$ is covered by $H_{L}$. Also, the sequence of cliques intersected by $E(H_{L})$, i.e. $Q_{1}, Q_{2}, \ldots, Q_{s}$, corresponds to the sequence of vertices in $D$, and so, since $D$ is connected and 2-regular, $H_{L}$ is a connected cycle, proving our claim.
\end{proof}
\begin{remark}\label{Remarkdominatingnottruefordeltagreaterthan3}
Note that Lemma \ref{Lemma TechnicalUncoveredCorrespondance} is not true in general for $\Delta(G) > 3$. An easy example is shown in Figure \ref{delta4}: indeed, an arbitrary perfect matching of $L(G)$ can be extended to a Hamiltonian cycle, i.e. $L(G)$ is PMH, but there is no dominating cycle in $G$.
\end{remark}

\begin{figure}[ht]
      \centering
      \includegraphics[width=0.485\textwidth]{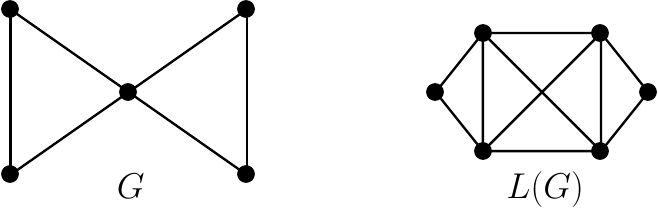}
	  \caption{A graph with maximum degree 4 whose line graph is PMH.}
      \label{delta4}
\end{figure}

By using Lemma \ref{Lemma TechnicalUncoveredCorrespondance}, we can furnish a first sufficient condition on $G$ assuring that its line graph is PMH.

\begin{theorem}\label{Theorem MaxDeg3}
Let $G$ be a Hamiltonian graph such that $\Delta(G)\leq 3$. Then, $L(G)$ is PMH.
\end{theorem}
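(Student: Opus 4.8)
The plan is to read Theorem \ref{Theorem MaxDeg3} as an almost immediate corollary of Lemma \ref{Lemma TechnicalUncoveredCorrespondance}. To show that $L(G)$ is PMH I must verify that \emph{every} perfect matching $M$ of $L(G)$ extends to a hamiltonian cycle of $L(G)$. Since $G$ is connected with $\Delta(G)\leq 3$, the hypotheses of the lemma are met, so for a fixed $M$ it suffices to exhibit a dominating cycle $D$ of $G$ whose vertices untouched by $D$ correspond to a subset of the cliques of $\mathcal{Q}$ that $M$ does not intersect.

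The key observation is that hamiltonicity supplies such a $D$ uniformly in $M$. Indeed, let $D$ be any hamiltonian cycle of $G$. Being $2$--regular and passing through every vertex of $G$, the cycle $D$ is in particular a dominating cycle of $G$ (every edge of $G$ is incident to a vertex of $D$), and it leaves \emph{no} vertex of $G$ untouched. Hence the set of untouched vertices is empty, and the empty set is trivially contained in the collection of cliques of $\mathcal{Q}$ not intersected by $M$, whatever $M$ may be. The condition of Lemma \ref{Lemma TechnicalUncoveredCorrespondance} is therefore satisfied, and vacuously so.

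Consequently, for an arbitrary perfect matching $M$ of $L(G)$ the lemma yields a hamiltonian cycle of $L(G)$ containing $M$; since $M$ was arbitrary, $L(G)$ is PMH. In the degenerate case where $|E(G)|$ is odd the line graph $L(G)$ has no perfect matching and is PMH vacuously, so no separate argument is needed there.

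I do not anticipate a genuine obstacle: all of the real work is already packaged inside Lemma \ref{Lemma TechnicalUncoveredCorrespondance}. The only points requiring care are purely formal, namely confirming that a hamiltonian cycle of $G$ qualifies as a dominating cycle in the precise sense used by the lemma, and noting that the ``subset of unintersected cliques'' requirement is met by the empty set. If there is any subtlety worth flagging, it is that the lemma is strong enough to let the choice of $D$ be independent of $M$, which is exactly what collapses the whole PMH property down to the single hypothesis that $G$ is hamiltonian.
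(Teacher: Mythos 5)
Your proposal is correct and follows essentially the same route as the paper: take a hamiltonian cycle of $G$ as the dominating cycle required by Lemma \ref{Lemma TechnicalUncoveredCorrespondance}, observe that it leaves no vertex untouched, so the empty set of untouched vertices is trivially a subset of the cliques of $\mathcal{Q}$ not intersected by $M$, and conclude. The extra remarks about the odd-size degenerate case and the $M$-independence of $D$ are harmless and consistent with the paper's argument.
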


\begin{proof}
Let $H$ be a Hamiltonian cycle of $G$. Given any perfect matching $M$ of $L(G)$, since the set of vertices untouched by $H$ in $G$ is empty, it is trivially a subset of the cliques in $\mathcal{Q}$ not intersected by $M$. Consequently, by Lemma \ref{Lemma TechnicalUncoveredCorrespondance}, $M$ can be extended to a Hamiltonian cycle of $L(G)$. Since $M$ was arbitrary, $G$ is PMH.
\end{proof}

In particular, Theorem \ref{Theorem MaxDeg3} applies for all Hamiltonian cubic graphs. However, in the cubic case we can say more. In 1964, Kotzig \cite{Kotzig} proved that the existence of a Hamiltonian cycle in a cubic graph is both a necessary and sufficient condition for a partition of $L(G)$ in two Hamiltonian cycles. We show the following.

\begin{corollary}\label{corollarymaxdeg3}
Let $G$ be a Hamiltonian cubic graph and $M$ a perfect matching of $L(G)$. Then, $L(G)$ can be partitioned in two Hamiltonian cycles, one of which contains $M$.
\end{corollary}

\begin{proof}
If we extend $M$ to a Hamiltonian cycle of $L(G)$ using the method described in Lemma \ref{Lemma TechnicalUncoveredCorrespondance}, we obtain a Hamiltonian cycle $H_{1}$ whose edge set intersects each triangle in $\mathcal{Q}$, since $G$ is Hamiltonian. Moreover, since $E(H_{1})$ intersects $Q\in\mathcal{Q}$ in one or two edges, the edges of $L(G)-E(H_{1})$ intersect $Q$ in two edges or one, respectively. Therefore, the edges in $L(G)-E(H_{1})$ induce a Hamiltonian cycle $H_{2}$ of $L(G)$ whose edges intersect the triangles in $\mathcal{Q}$ in the same order as the edges in $H_{1}$.
\end{proof}

When considering Theorem \ref{Theorem MaxDeg3}, one could wonder if the two conditions on the maximum degree and the Hamiltonicity of $G$ could be improved in some way.
First of all, we remark that our result is best possible in terms of the maximum degree of $G$: indeed, if $G$ is a Hamiltonian graph such that $\Delta(G)=4$, then, $L(G)$ is not necessarily PMH. For instance, consider the Hamiltonian graph in Figure \ref{Figure MaxDeg4NotPMH} having maximum degree $4$, and let $M$ be the perfect matching of $L(G)$ shown in the figure.

\begin{figure}[H]
      \centering
      \includegraphics[width=0.5\textwidth]{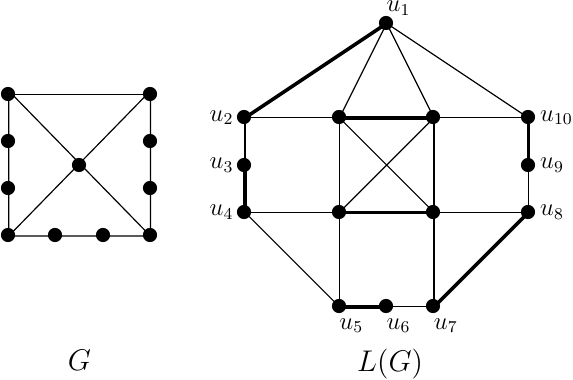}
	  \caption{A Hamiltonian graph with maximum degree 4 whose line graph is not PMH.}
      \label{Figure MaxDeg4NotPMH}
\end{figure}
Suppose $M$ can be extended to a Hamiltonian cycle. Then, it should include all edges incident to its vertices of degree $2$, and so it should contain the paths $u_{1},u_{2}, \ldots,u_{4}$ and $u_{5}, u_{6}, \ldots,u_{10}$. However, these two paths cannot be extended to a Hamiltonian cycle of $L(G)$ containing $M$, contradicting our assumption.

On the other hand, Hamiltonicity of $G$ in Theorem \ref{Theorem MaxDeg3} is not a necessary condition, since there exist non-Hamiltonian cubic graphs whose line graph is PMH. In particular, in Proposition \ref{prophypoham} we prove that hypohamiltonian cubic graphs are examples of such graphs. Let us recall that a graph $G$ is  \textit{hypohamiltonian} if $G$ is not Hamiltonian, but for every $v\in V(G)$, $G-v$ has a Hamiltonian cycle.

\begin{proposition}\label{prophypoham}
Let $G$ be a hypohamiltonian graph such that $\Delta(G)\leq 3$. Then, $L(G)$ is PMH.
\end{proposition}

\begin{proof}
Let $M$ be a perfect matching of $L(G)$. Since $|\mathcal{Q}|=|V(G)|$ is strictly larger than $|M|=\frac{|V(L(G))|}{2}\leq \frac{\frac{3}{2}|V(G)|}{2}$, there surely exists some clique $Q\in \mathcal{Q}$ which is not intersected by $M$. Let $v$ be the corresponding vertex in $G$. Since $G$ is hypohamiltonian, there exists a dominating cycle in $G$ which passes through all the vertices of $G$ except $v$, and so by Lemma \ref{Lemma TechnicalUncoveredCorrespondance},  $L(G)$ is PMH, since $M$ was arbitrary.
\end{proof}

Finally, another possible improvement of Theorem \ref{Theorem MaxDeg3} could be a weaker assumption on the length of the longest cycle of $G$ (i.e. the circumference of $G$, denoted by $circ(G)$). However, in Proposition \ref{Prop ExpandingAllButOne} we exhibit cubic graphs having circumference just one less than the order of $G$ whose line graphs are not PMH.

We will make use of the following standard operations on cubic graphs known as \linebreak$Y$-\textit{reduction} (shrinking a triangle to a vertex) and of its inverse, $Y$\textit{-extension} (expanding a vertex to a triangle), illustrated in Figure \ref{Figure Yoperations}.

\begin{figure}[ht]
      \centering
      \includegraphics[width=0.55\textwidth]{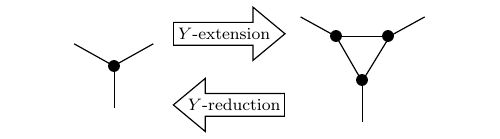}
	  \caption{$Y$-operations}
      \label{Figure Yoperations}
\end{figure}

For the proof of Proposition \ref{Prop ExpandingAllButOne}, we also need to show that each edge of $L(G)$, where $G$ is cubic and Hamiltonian, belongs to a perfect matching. This kind of property is extensively  studied in many papers and a graph $G$ is said to be \emph{1-extendable} if every edge in $G$ belongs to a perfect matching of $G$.
Theorem 2.1 in \cite{Plummer} states that every claw-free $3$-connected graph is $1$-extendable. By recalling that every line graph is a claw-free graph, we have, in particular, that $L(G)$ is $1$-extendable if $G$ is cubic and $3$-edge-connected. The generalisation to an arbitrary Hamiltonian cubic graph $G$ is not hard to achieve by using such a result, but here we prefer to present a direct short proof which is valid for any bridgeless cubic graph and which makes use of the following tool from the proof of Proposition 2 in \cite{Mazzuoccolo}.

\begin{remark}\cite{Mazzuoccolo}\label{Remark Graph Isomorphic to Original}
Let $G_{1}$ be a cubic graph of even size and $M$ a perfect matching of $L(G_1)$, with canonical clique partition $\mathcal{Q}$. The graph $G_2$ obtained by removing all the edges in $M$ from $L(G_{1})$ and then applying $Y$-reductions to all the triangles in $\mathcal{Q}$ not intersected by $M$, is isomorphic to $G_{1}$. 
\end{remark}

Remark \ref{Remark Graph Isomorphic to Original} follows by considering the natural bijection $\phi$ between $V(G_{1})$ and $\mathcal{Q}$, and the function $\psi_{M}$ between $\mathcal{Q}$ and $V(G_{2})$, where $\psi_{M}(Q)$, for $Q\in\mathcal{Q}$, is defined as follows. If $E(Q)\cap M=\emptyset$, $Q$ is mapped to the vertex in $G_{2}$ obtained after applying a $Y$-reduction to $Q$. Otherwise, if $E(Q)\cap M\neq\emptyset$, $Q$ is mapped to the vertex in $G_{2}$ corresponding to the vertex in $Q$ unmatched by $E(Q)\cap M$. It is not hard to prove that $\psi_{M}\circ\phi$ is an isomorphism between $G_{1}$ and $G_{2}$.

\begin{lemma}\label{Lemma TechnicalPMIntersectsTriangle}
Let $G$ be a bridgeless cubic graph of even size. Then, every edge of $L(G)$ belongs to a perfect matching.
\end{lemma}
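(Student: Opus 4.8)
The plan is to work through the bijection, recalled in Section~\ref{PMH-linegraphs}, between the perfect matchings of $L(G)$ and the $P_{3}$-decompositions of $G$. An edge of $L(G)$ corresponds to a pair of edges $e,f\in E(G)$ sharing a common vertex; since $G$ is simple these meet in exactly one vertex, say $e=va$ and $f=vb$ with $a\neq b$. Under the bijection, this edge of $L(G)$ lies in a perfect matching precisely when $G$ admits a $P_{3}$-decomposition one of whose paths is $a$-$v$-$b$. Thus the statement reduces to the following claim: for every vertex $v$ and every two edges $e,f$ incident to $v$, there is a $P_{3}$-decomposition of $G$ pairing $e$ and $f$ at $v$.

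To produce such a decomposition I would first set the path $a$-$v$-$b$ aside and decompose what is left. Consider $G':=G-e-f$. Its size is $|E(G)|-2$, hence even, and I claim $G'$ is connected. Granting the claim, $G'$ is a connected graph of even size and therefore admits a $P_{3}$-decomposition $\mathcal{P}$ by the elementary fact recalled in Section~\ref{PMH-linegraphs}; adjoining the path $a$-$v$-$b$ to $\mathcal{P}$ yields a $P_{3}$-decomposition of $G$ in which $e$ and $f$ are paired at $v$, which is exactly what is needed, and the corresponding perfect matching of $L(G)$ contains the chosen edge.

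The hard part, and the only place where bridgelessness enters, is showing that $G'=G-e-f$ is connected. Let $g=vc$ be the third edge at $v$, which survives in $G'$. Suppose for contradiction that $G'$ is disconnected, and let $K$ be the component containing $v$ (hence also $c$, through $g$) and put $L:=V(G)\setminus K\neq\emptyset$. Every edge of $G$ joining $K$ to $L$ must belong to $\{e,f\}$, and since $G$ is connected at least one of them does. If exactly one of $e,f$ crosses between $K$ and $L$, then it is the unique $K$--$L$ edge and hence a bridge of $G$, a contradiction. If both cross, then $a,b\in L$ while $c\in K$, so every $K$--$L$ edge is incident to $v$; consequently the only edge of $G$ leaving $\{v\}\cup L$ towards $K\setminus\{v\}$ is $g$ itself, making $g$ a bridge, again a contradiction. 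Either way bridgelessness is violated, so $G'$ is connected.

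I expect this connectivity claim to be the main obstacle, since it is where the bridgeless hypothesis is genuinely used. The key observation is that, because $e$ and $f$ are both incident to $v$, separating the component of $v$ from the rest would cut all of $v$'s access except through its third edge $g$, forcing $g$ to be a bridge. Everything else is routine once the bijection and the $P_{3}$-decomposition fact for connected graphs of even size are taken as known; in particular I would not need the isomorphism statement of Remark~\ref{Remark Graph Isomorphic to Original} for this argument.
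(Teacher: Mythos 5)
Your proof is correct, but it follows a genuinely different route from the one in the paper. The paper's argument is structural and leans on two external ingredients: it starts from an arbitrary perfect matching $M$ of $L(G)$, observes via Remark~\ref{Remark Graph Isomorphic to Original} that $L(G)-M$ is a cubic graph obtained from $G$ by $Y$--extensions (hence bridgeless, since $Y$--extensions preserve bridgelessness), and then invokes Sch\"{o}nberger's theorem that every bridgeless cubic graph is $1$--extendable to find a perfect matching of $L(G)-M$, and hence of $L(G)$, through the prescribed edge. You instead work entirely on the side of $G$: you translate the problem through the correspondence with $P_3$--decompositions, delete the two edges $e=va$, $f=vb$ of $G$ forming the target edge of $L(G)$, prove that $G-e-f$ remains connected (this is where bridgelessness and cubicity enter, via the clean observation that a disconnection would force either one of $e,f$ or the third edge $g$ at $v$ to be a bridge), and then apply the elementary fact from Section~\ref{PMH-linegraphs} that a connected graph of even size has a $P_3$--decomposition, to which you adjoin the path $a$--$v$--$b$. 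Your connectivity case analysis is complete and the cut $[\{v\}\cup L,\,K\setminus\{v\}]=\{g\}$ argument is airtight. What your approach buys is self-containment: it avoids both Sch\"{o}nberger's theorem and the $Y$--extension remark, and it makes the role of the bridgeless hypothesis fully transparent. What the paper's approach buys is brevity given the cited machinery, and it keeps Remark~\ref{Remark Graph Isomorphic to Original} in active use, which the authors need again in the proof of Proposition~\ref{Prop ExpandingAllButOne}.
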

\begin{proof}
Let $e\in E(L(G))$ and let $M$ be a perfect matching of $L(G)$. Assume $e \notin M$, otherwise the statement holds. The graph $L(G)-M$ is cubic and by Remark \ref{Remark Graph Isomorphic to Original} can be obtained by applying suitable $Y$-extensions to $G$. Since $G$ is bridgeless, and the resulting graph after applying $Y$-extensions to a bridgeless graph is again bridgeless, we have that $L(G)-M$ is bridgeless as well. Moreover, in \cite{Schonberger}, Sch\"{o}nberger proved that every bridgeless cubic graph is $1$-extendable: hence, there exists a perfect matching of $L(G)-M$ which contains $e$. Such a perfect matching is trivially also a perfect matching of $L(G)$ containing $e$.
\end{proof}

The following proposition shows that the Hamiltonicity condition in Theorem \ref{Theorem MaxDeg3} cannot be relaxed to any other condition regarding the length of the longest cycle in $G$. Indeed, starting from an appropriate cubic graph and performing suitable $Y$-extensions, we obtain a graph of circumference one less than its order whose line graph is not PMH.

\begin{proposition}\label{Prop ExpandingAllButOne}
Let $G$ be a hypohamiltonian cubic graph of odd size. Let $G'$ be a graph obtained by performing a $Y$-extension to all vertices of $G$ except one. Then, $circ(G')=|V(G')|-1$ and $L(G')$ is not PMH.
\end{proposition}
\begin{proof}
Let $v$ be the vertex of $G$ to which we do not apply a $Y$-extension, and let the resulting graph be $G'$, with the vertex of $G'$ corresponding to $v$ denoted by $v'$. Since $G$ is hypohamiltonian, $G$ admits a cycle $C$ of length $|V(G)|-1$ which passes through all the vertices of $G$ except $v$. Consequently, $G'$ admits a cycle $C'$ which passes through all the vertices of $G'$ except $v'$ and whose edges intersect the $Y$-extended triangles in the same order that $C$ passes through all the corresponding vertices in $G$, resulting in the three vertices of each $Y$-extended triangle being consecutive in $C'$. Since $G'$ is not Hamiltonian, $circ(G')=|V(G')|-1$. We proceed by supposing that $L(G')$ is PMH, for contradiction.  Denote by $Q_{v'}$ the triangle in the canonical clique partition of $L(G')$ which corresponds to the vertex $v'$. By construction of $G'$, we have $|E(G')|=|E(G)|+3(|V(G)|-1)$. Since both $|V(G)|-1$ and $|E(G)|$ are odd, $|E(G')|$ is even, i.e. $L(G')$ has even order. Moreover, since $G$ is hypohamiltonian, $G$ is bridgeless. Consequently, $G'$ is bridgeless as well, since it is obtained by applying $Y$-extensions to $G$, and so, by Lemma \ref{Lemma TechnicalPMIntersectsTriangle}, there exists a perfect matching $M$ of $L(G')$ which intersects a chosen edge of $Q_{v'}$. Lemma \ref{Lemma TechnicalUncoveredCorrespondance} assures that there exists a dominating cycle $D$ in $G'$ such that the set of its uncovered vertices does not contain $v'$.
Furthermore, the edge set of every dominating cycle of $G'$, in particular $E(D)$, intersects at least one edge of all the $Y$-extended triangles. Consequently, the dominating cycle $D$ induces a cycle in $G$ which passes through $v$ and also through every other vertex of $G$, making $G$ Hamiltonian, a contradiction.
\end{proof}

As already remarked, the graph in Figure \ref{Figure MaxDeg4NotPMH} is Hamiltonian, but not every perfect matching in its line graph can be extended to a Hamiltonian cycle. Such an example is not regular, and we are not able to find a regular one. A most natural question to ask is whether the Hamiltonicity and regularity of a graph are together sufficient conditions to guarantee the PMH-property of its line graph. Thus, we suggest the following problem.

\begin{problem}\label{hamregprob}
Let $G$ be an $r$-regular Hamiltonian graph of even size, for $r\geq 4$. Does $L(G)$ have the PMH-property?
\end{problem}

To conclude this section, let us note that not all $4$-regular (and so not all Eulerian) graphs of even size have a PMH line graph.
A non-Hamiltonian example is given in Figure \ref{Figure 4RegNotPMH}. It is not hard to check that every perfect matching of $L(G)$ which contains the edges $e_{1}e_{2}$ and $e_{3}e_{4}$ cannot be extended to a Hamiltonian cycle of $L(G)$.

\begin{figure}[ht]
      \centering
      \includegraphics[width=0.4\textwidth]{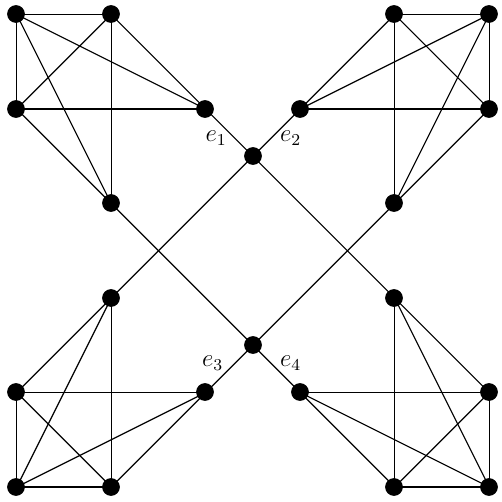}
	  \caption{A non-Hamiltonian $4$-regular graph whose line graph does not have the PMH-property.}
      \label{Figure 4RegNotPMH}
\end{figure}

Since the graphs in Figure \ref{Figure MaxDeg4NotPMH} and Figure \ref{Figure 4RegNotPMH} are both not simultaneously Eulerian and Hamiltonian, we pose a further problem.

\begin{problem}\label{hameulprob}
Let $G$ be a graph of even size which is both Eulerian and Hamiltonian. Does $L(G)$ have  the PMH-property?
\end{problem}

\section{Other classes of graphs whose line graphs are PMH}\label{otherclassesPMH}

The complete graph $K_n$, for even $n$, and the complete bipartite graph $K_{m,m}$, for $m \ge 2$, are clearly PMH. To stay in line with the contents of this paper, we now see whether their line graphs are also PMH. To this purpose, given an edge-colouring (not necessarily proper) of a Hamiltonian graph, a Hamiltonian cycle in which no two consecutive edges have the same colour will be referred to as a \emph{properly coloured Hamiltonian cycle}.

\subsection{Complete graphs}\label{Section CompleteGraphs}

First of all, we note that the line graph of a complete graph $K_n$ has a perfect matching if and only if the number of edges in $K_{n}$ is even. Hence, in the sequel we consider only complete graphs with $n\equiv 0,1 \mod 4$.

We denote the vertices of $K_{n}$ by $\{v_i: i \in [n]\}$ and the edges of $K_{n}$ by $\{e_{i,j}=v_iv_j: i\neq j \}$. Moreover, $V(L(K_{n}))$ is denoted by $\{v_{i,j}: i\neq j\}$ where the vertex $v_{i,j}$ corresponds to the edge $e_{i,j}$ of $K_n$. Finally, we denote the edges of $L(K_{n})$ by $\{e^{\,i}_{j,k}=v_{i,j}v_{i,k}:j \neq k\}$. Note that the upper index in the notation $e^{\,i}_{j,k}$ immediately indicates that the considered edge belongs to the clique $Q_i$ in the canonical clique partition of $L(K_n)$, while the order of lower indices is irrelevant.

The proof of our main theorem in this section, Theorem \ref{Theorem CompleteGraphs}, makes use of a special case of a result by Daykin \cite{Daykin1976} from 1976 which asserts the existence of a properly coloured Hamiltonian cycle if the edges of $K_{n}$ are coloured according to the following constraints.

\begin{theorem}\cite{Daykin1976}\label{Theorem Daykin1976}
If the edges of the complete graph $K_{n}$, for $n\geq 6$, are coloured in such a way that no three edges of the same colour are incident to any given vertex, then there exists a properly coloured Hamiltonian cycle.
\end{theorem}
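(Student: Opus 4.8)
The plan is to argue by extremality. Among all properly coloured cycles in the edge-coloured $K_n$, I would take one of maximum length, say $C=(u_1,u_2,\ldots,u_k,u_1)$, and show that the hypothesis on the colouring forces $k=n$. First I would note that a properly coloured cycle exists at all: with $n\geq 6$ and at most two edges of each colour at every vertex, a short properly coloured cycle (indeed a properly coloured triangle) is easy to produce, so the extremal object is well defined. Suppose for contradiction that $k<n$ and fix a vertex $v\notin C$. Since $K_n$ is complete, $v$ is joined to every $u_i$, and because no colour occurs three times at $v$, among the $k$ chords $vu_1,\ldots,vu_k$ each colour is used at most twice; in particular these chords realise at least $\lceil k/2\rceil$ distinct colours.

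The heart of the argument is to insert $v$ into $C$ so as to obtain a properly coloured cycle of length $k+1$, contradicting maximality. Inserting $v$ between $u_i$ and $u_{i+1}$ replaces the edge $u_iu_{i+1}$ by the two chords $u_iv$ and $vu_{i+1}$, and, writing $c(e)$ for the colour of an edge $e$, the result stays properly coloured precisely when (a) $c(u_iv)\neq c(vu_{i+1})$, (b) $c(u_iv)\neq c(u_{i-1}u_i)$, and (c) $c(vu_{i+1})\neq c(u_{i+1}u_{i+2})$. I would try to show that at least one of the $k$ positions avoids all three clashes. One useful observation is that condition (a) fails only when two \emph{consecutive} chords share a colour; since each colour is used at most twice by $v$, these failures occur at distinct colours and at pairwise non-adjacent positions, so type-(a) positions number at most $\lfloor k/2\rfloor$. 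A second observation controls the endpoint clashes (b) and (c): at any fixed vertex $u_j$ the two incident cycle edges carry different colours because $C$ is properly coloured, so the chord $vu_j$ can agree in colour with at most one of them, and hence each vertex of $C$ contributes at most one endpoint clash.

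The main obstacle is that these two observations, combined by a naive union bound, only bound the number of blocked positions by roughly $\tfrac{3}{2}k$, which does not immediately leave a position free. To close this gap I would refine the single-edge insertion into an \emph{exchange argument}: if every position is blocked, then the pattern of colour coincidences between the chords $vu_i$ and their adjacent cycle edges is highly constrained, and one can then reroute two edges of $C$ simultaneously — a properly coloured analogue of a $2$-opt move or a P\'osa-type rotation — to absorb $v$ nonetheless. It is precisely here that the hypothesis $n\geq 6$ enters: it guarantees enough distinct colours among the chords (at least $\lceil k/2\rceil\geq 3$ once $k$ is not tiny) and enough room on $C$ to perform the rerouting without creating a new monochromatic consecutive pair, while the finitely many small or degenerate configurations (very short cycles, or $v$ joined to $C$ almost monochromatically) are disposed of by direct inspection. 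Verifying that the exchange never reintroduces a clash is the technical crux of the proof.
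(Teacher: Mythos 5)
First, note that the paper does not prove this statement at all: it is quoted verbatim as a known result of Daykin \cite{Daykin1976}, so there is no internal proof to compare yours against. Judged on its own terms, your proposal has a genuine gap exactly where you yourself locate it. The extremal set-up (a longest properly coloured cycle $C$ of length $k$, a vertex $v$ off $C$, and the three clash conditions (a), (b), (c) for inserting $v$) is sound, and your two counting observations are correct: type-(a) blocked positions are pairwise non-adjacent (three consecutive chords of one colour would violate the hypothesis at $v$), giving at most $\lfloor k/2\rfloor$ of them, and each vertex of $C$ contributes at most one endpoint clash because its two cycle edges have distinct colours. But as you concede, this only bounds the blocked positions by roughly $\tfrac{3}{2}k$ out of $k$, so the single-insertion argument cannot conclude. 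Everything after that --- ``reroute two edges of $C$ simultaneously'', ``a properly coloured analogue of a $2$-opt move or a P\'osa-type rotation'', ``the finitely many small or degenerate configurations \ldots are disposed of by direct inspection'' --- is a description of what a proof would have to do, not a proof. You do not specify which pair of edges is exchanged, under which colour-coincidence pattern the exchange is triggered, or why the new cycle is properly coloured; and you explicitly flag that last verification as ``the technical crux''. That crux is precisely the content of Daykin's theorem, so the argument as written does not establish the statement.

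Two smaller points. The base case is not as innocent as you suggest: a properly coloured triangle is a rainbow triangle, and producing one from the hypothesis ``no colour appears three times at a vertex'' requires an argument (Gallai-type colourings show that rainbow triangles are not automatic in general edge-colourings), though it can be done here. And the role of the hypothesis $n\geq 6$ is asserted (``enough room on $C$ to perform the rerouting'') rather than demonstrated; in the absence of the actual exchange argument there is no way to check that $n\geq 6$ is where it is needed. If you want to complete this line of attack, the place to invest effort is in classifying the colour pattern forced on the chords $vu_1,\ldots,vu_k$ when all $k$ positions are blocked, and exhibiting an explicit two-edge exchange for each case of that classification.
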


In the following proof, the  process  of  traversing  one  path  after  another will be called \textit{concatenation of paths}. If two paths $P^1$ and $P^2$ have end-vertices $x,y$ and $y,z$, respectively, we write $P^1P^2$ to denote the path starting at $x$ and ending at $z$ obtained by traversing $P^1$ and then $P^2$.

\begin{theorem}\label{Theorem CompleteGraphs}
For $n\equiv 0,1\mod 4$, $L(K_{n})$ is PMH.
\end{theorem}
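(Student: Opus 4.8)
The plan is to exploit the correspondence between a perfect matching $M$ of $L(K_n)$ and a $P_3$--decomposition of $K_n$: each edge $e^c_{a,b}\in M$ records that the two edges $e_{c,a},e_{c,b}$ of $K_n$ form a path of length two centred at $v_c$. Thus $M$ assigns every edge of $K_n$ to exactly one of its endpoints, namely the centre of the path containing it, and hence every vertex $v_{i,j}$ of $L(K_n)$ is assigned to exactly one of the two cliques $Q_i,Q_j$ of the canonical clique partition that contain it. Writing $S_c$ for the set of vertices of $L(K_n)$ assigned to $Q_c$, the matching $M$ restricts to a perfect matching of $S_c$ inside the clique $Q_c$. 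The whole problem then becomes the following: arrange, in a single cyclic order, local paths that cover each $S_c$ and contain the corresponding $M$--edges, gluing consecutive paths by an edge of $L(K_n)$.

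To obtain the global cyclic skeleton I would colour the edges of $K_n$ so that two edges receive the same colour precisely when they are matched by $M$; since each colour class is a single path on three vertices, at most two edges of any given colour meet at a vertex (the two incident to its centre). Daykin's Theorem~\ref{Theorem Daykin1976} (valid for $n\ge 6$) then yields a properly coloured Hamiltonian cycle $C=(u_1,\dots,u_n)$ of $K_n$, that is, one in which no two consecutive edges are $M$--partners. This cycle supplies the backbone: traversing $C$ and making, at each $u_i=v_c$, a detour through $Q_c$ that covers $S_c$ and realises every $M$--edge of $S_c$ as a pair of consecutive vertices, produces a single cycle of $L(K_n)$ through all of its vertices. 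The crucial point of the proper colouring is that the two backbone edges meeting at $v_c$ are never an $M$--pair, so inserting the detour between them never conflicts with a forced transition, and the two incident backbone edges can play the role of the two ends of the detour, each gluing the path in $Q_c$ to the path of the neighbouring clique through a shared vertex of $K_n$.

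The construction of each local path should be routine once its end-vertices are fixed: inside the clique $Q_c$ the set $S_c$ induces a complete subgraph equipped with the perfect matching $M$ restricted to $S_c$, and in a clique one can always find a Hamiltonian path that contains a prescribed perfect matching and has prescribed end-vertices, concatenating the $M$--edges with suitable connecting edges $e^c_{\cdot,\cdot}$. Because the backbone $C$ already links these local paths into one cyclic sequence, connectivity of the resulting object is automatic, and no separate argument is needed to rule out a union of several shorter cycles; this is precisely what makes a Hamiltonian cycle of $K_n$, rather than an arbitrary Eulerian-type structure, the right tool here.

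I expect the main obstacle to be the global bookkeeping at the seams: one must decide, for every backbone edge $e_{u_i,u_{i+1}}$, to which of the two neighbouring detours it belongs, ensure that each vertex of $L(K_n)$ is covered exactly once, and verify that the prescribed end-vertices of consecutive local paths really are adjacent in $L(K_n)$ all the way around $C$ --- including the degenerate vertices $v_c$ with $S_c=\emptyset$, where the backbone must simply pass through. A secondary point is that Theorem~\ref{Theorem Daykin1976} requires $n\ge 6$, so the remaining small cases $K_4$ and $K_5$ (the only others with $n\equiv 0,1\bmod 4$) have to be checked directly. Both residues $n\equiv 0,1\bmod 4$ should be handled uniformly by this scheme, since allowing several edges incident to a vertex of $K_n$ to appear consecutively absorbs the odd vertex degrees that would obstruct a pure Eulerian--circuit approach when $n\equiv 0\bmod 4$.
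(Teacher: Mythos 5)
Your proposal follows essentially the same route as the paper's proof: encode the perfect matching $M$ as a monochromatic $P_3$--decomposition of $K_n$, invoke Daykin's theorem to get a properly coloured hamiltonian cycle serving as a backbone, and concatenate $M$--alternating paths inside the cliques $Q_i$, using the proper colouring exactly as you do to guarantee that the two backbone edges at each vertex are not an $M$--pair. The paper settles your remaining small cases by Theorem~\ref{Theorem MaxDeg3} for $n=4$ and by a short counting argument for $n=5$, and its choice of local path (the $M$--alternating path in $Q_i$ from $v_{i-1,i}$ to $v_{i,i+1}$ containing $M\cap E(Q_i)$) resolves the seam bookkeeping you flag.
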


\begin{proof}
Since $K_{4}$ is Hamiltonian and cubic, by Theorem \ref{Theorem MaxDeg3}, the result holds for $n=4$. Therefore, we can assume $n>4$.

Let $M$ be a perfect matching of $L(K_{n})$. We colour the $\frac{1}{4}n(n-1)$ edges of $M$ with $\frac{1}{4}n(n-1)$ different colours. For all $e^{\,i}_{j,k} \in M$, we colour the edges $e_{i,j}$ and $e_{i,k}$ in $K_{n}$ with the same colour given to the edge $e^{\,i}_{j,k}$ in $L(K_{n})$. This gives a $P_{3}$-decomposition of $K_{n}$ in which each $P_{3}$ is monochromatic and the colours of all the 3-paths are pairwise distinct.

If $n=5$, the total number of Hamiltonian cycles in $K_{5}$ is $\frac{4!}{2}=12$. Each of the five monochromatic 3-paths in $K_{5}$ is on exactly two distinct Hamiltonian cycles. Therefore, the number of Hamiltonian cycles containing a monochromatic $P_{3}$ is at most 10, hence $K_{5}$ contains at least two (complementary) properly coloured Hamiltonian cycles.  Without loss of generality, let one of them be $H$, say $H = (v_1 , v_2, \ldots , v_5 ).$

For $n\ge 8$, by Theorem \ref{Theorem Daykin1976}, there exists a properly coloured Hamiltonian cycle $H$ in $K_{n}$ and again, without loss of generality, we can assume $H=(v_1,v_2,\ldots,v_n)$.

Now, for all $n \ge 5$ and $n\equiv 0,1\mod 4$, we will use the properly coloured Hamiltonian cycle $H$ in $K_n$ to obtain a Hamiltonian cycle $H_L$ in $L(K_n)$ containing the perfect matching $M$. We construct the Hamiltonian cycle $H_L$ in such a way that it enters and exits each clique in the canonical clique partition $\mathcal{Q}$ of $L(K_{n})$ exactly once. More precisely, we construct a suitable path $P^i$ in each clique $Q_i$ and we obtain $H_L$ as a concatenation of such paths following the order determined by $H$.
Consider the $(n-1)$-clique $Q_{i}$ and its two vertices $v_{i-1,i}$ and $v_{i,i+1}$. The corresponding edges $e_{i-1,i}$ and $e_{i,i+1}$, in $K_{n}$, are not of the same colour since they are consecutive in $H$, and so the edge $e^i_{i-1,i+1} \not\in M$. We assign a linear order $<_i$ to the set of edges $M \cap E(Q_i)$, with $(M \cap E(Q_i),<_i)=\mu_i$, such that:
\begin{itemize}
\item[(i)] if $M \cap E(Q_i)$ contains an edge incident to $v_{i-1,i}$, such an edge
is the first edge of $\mu_i$, and
\item[(ii)] if $M \cap E(Q_i)$ contains an edge incident to $v_{i,i+1}$, such an edge is the last edge of $\mu_i$,
\end{itemize}

Note that $<_{i}$ exists since $e^i_{i-1,i+1} \not\in M$. Next, we construct an $M$-alternating path in $Q_{i}$, which we denote by $P^i$, starting
at $v_{i-1,i}$ and ending at $v_{i,i+1}$ as follows: 
$P^i$ alternates between an edge of $\mu_i$ and an edge which is simultaneously adjacent to two consecutive edges in $\mu_{i}$, except possibly the first and/or last edge in $P^{i}$. Note that the choice of edges not belonging to $M\cap E(Q_{i})$ as given above is always possible since $Q_i$ is a clique. Consequently, $M \cap E(Q_i ) \subset E(P_i)$.

Now we define $H_{L}$ to be $P^{1}P^{2} \dots P^{n}$. Note that $H_L$ is a cycle since the paths $P^{i}$ are all internally and pairwise disjoint, and the beginning of $P^1$ coincides with the end of $P^n$. Moreover, $H_{L}$ is Hamiltonian because $M\subset E(H_{L})$ and so each vertex of the line graph belongs to $H_L$.
\end{proof}

\subsection{Complete bipartite graphs}\label{section kmm}
In 1976, Chen and Daykin considered an analogous version of Theorem \ref{Theorem Daykin1976} for the complete bipartite graph $K_{m,m}$ (see \cite{ChenDaykin1976}).
A particular case of Theorem $1'$ in \cite{ChenDaykin1976} can be stated as follows.

\begin{theorem}\cite{ChenDaykin1976} Consider an edge-colouring of the complete bipartite graph $K_{m,m}$ such that no vertex is incident to more than $k$ edges of the same colour. If $m\geq25k$, then there exists a properly coloured Hamiltonian cycle.
\end{theorem}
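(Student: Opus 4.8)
The plan is to adapt the P\'{o}sa rotation--extension technique to the setting of properly coloured paths. Write $A$ and $B$ for the two parts of $K_{m,m}$, each of size $m$, and call a path \emph{good} if any two of its consecutive edges receive distinct colours. Every path in $K_{m,m}$ alternates between $A$ and $B$, and a good hamiltonian cycle is precisely a properly coloured one; so it suffices to produce a good spanning cycle.

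First I would take a good path $P$ of maximum order and control how many vertices it can miss. If an endpoint $y \in A$ of $P$ meets its last edge in colour $c$, then of the $m$ edges from $y$ to $B$ at most $k$ have colour $c$ by the colour-degree hypothesis; hence, were more than $k$ vertices of $B$ left uncovered by $P$, one of them could be appended to $y$ along a non-$c$ edge, contradicting maximality. The symmetric statement at the other endpoint shows that $P$ misses at most $k$ vertices from each part.

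Next comes the rotation phase. Fixing one endpoint $x$, I would study the set $S$ of vertices that can serve as the opposite endpoint of a good path obtained from $P$ by a sequence of \emph{legal rotations}: such a rotation deletes an edge incident to the current endpoint $y$ and inserts a chord $y z$ to an interior vertex $z$, provided the new edge $y z$ creates no colour repetition either at $z$ or at the now-internal vertex $y$. Each endpoint offers $\Theta(m)$ candidate chords, while the two colour constraints forbid only $O(k)$ of them; iterating shows $|S| = \Omega(m)$. Any endpoint in $S$ adjacent to a missed vertex through an admissible colour would extend $P$, so---together with the bound of the previous paragraph---$P$ must in fact be spanning. Finally, for $y' \in S$ I would close the associated good spanning path into a cycle via the edge $x y'$; this fails only when $x y'$ repeats the colour at $x$ or at $y'$, excluding again $O(k)$ members of $S$. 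As $|S|$ is linear in $m$ and the obstructions are linear in $k$, the hypothesis $m \ge 25k$ leaves a surviving $y'$ and hence a properly coloured hamiltonian cycle.

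The hard part will be the bookkeeping in the rotation phase: one must verify that each rotation preserves the proper colouring simultaneously at the rotated endpoint and at the insertion vertex, and, crucially, that the obstructions accrued along an entire sequence of rotations still delete only $O(k)$ candidates at each stage instead of accumulating. Keeping this uniform---so that the count of reachable endpoints remains $\Omega(m)$ throughout and a valid closing colour survives---is exactly where an explicit linear relation between $m$ and $k$, with a concrete constant such as $25$, becomes necessary.
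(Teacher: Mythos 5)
First, a remark on scope: the paper does not prove this statement at all --- it is quoted verbatim as a special case of Theorem $1'$ of Chen and Daykin \cite{ChenDaykin1976} and used as a black box. So there is no internal proof to compare yours against, and your attempt has to stand entirely on its own.

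The extension step of your sketch is sound: an endpoint of a maximal good path is joined to every uncovered vertex of the opposite part, and at most $k$ of those edges carry the colour of the last path edge, so at most roughly $k$ vertices per part are missed. The genuine gap is in the rotation phase, and it is not merely ``bookkeeping''. Write the path as $u_1,\dots,u_t=y$. For the chord $yu_i$ to give a legal rotation (new endpoint $u_{i+1}$) you need both $c(yu_i)\neq c(yu_{t-1})$ and $c(yu_i)\neq c(u_{i-1}u_i)$. The first condition excludes at most $k$ chords, but in the second the forbidden colour \emph{varies with} $i$, so there is no $O(k)$ bound on how many chords it kills. Concretely, give each candidate index $i$ a fresh colour $\gamma_i$ and set $c(yu_i)=c(u_{i-1}u_i)=\gamma_i$: since the candidates occupy every other position, the path can still be properly coloured, every vertex sees each colour at most twice (so even $k=2$ suffices), and yet \emph{every} rotation at $y$ is blocked. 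Hence $|S|=\Omega(m)$ does not follow, and the second half of the argument, including the closing step, collapses. This obstruction is precisely why Daykin and Chen--Daykin do not use P\'osa rotations but instead a switching argument on an extremal properly coloured configuration; some such global device (or a much more delicate selection of admissible chords) is needed before any explicit constant like $25$ can be extracted.
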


By considering the case $k=2$ in the previous theorem, i.e. $m\geq 50$, and by using an argument very similar to the one used for complete graphs in Section \ref{Section CompleteGraphs}, one could obtain that $L(K_{m,m})$ is PMH for every even $m\geq 50$. However, in a forthcoming paper, three of the authors give a more complete result and extend this by using a different and more technical approach, which goes beyond the scope of this paper. They prove the following theorem.

\begin{theorem}\label{TheoremMain}\cite{savedbytherook}
Let $m_{1}$ be an even integer and let $m_{2}\geq 1$. Then, $L(K_{m_{1},m_{2}})$ does not have the PH-property if and only if $m_{1}=2$ and $m_{2}$ is odd.
\end{theorem}

\subsection{Arbitrarily traceable graphs}
A graph $G$ is said to be \textit{arbitrarily traceable} (or equivalently \textit{randomly Eulerian}) from a vertex $v\in V(G)$ if every walk starting from $v$ and not containing any repeated edges can be completed to an Eulerian tour. This notion was firstly introduced by Ore in \cite{Ore}, who proved that an Eulerian graph $G$ is arbitrarily traceable from  $v$ if and only if every cycle in $G$ touches $v$. Here we show that every perfect matching $M$ of the line graph of an arbitrarily traceable graph can be extended to a Hamiltonian cycle.

Note that the technique used in this proof is in some way different from what was used in the case of complete graphs in Section \ref{Section CompleteGraphs}. Again, a perfect matching $M$ of $L(G)$ corresponds to a $P_3$-decomposition of $G$, but this time we construct an Euler tour of the original graph (instead of a Hamiltonian cycle) such that two edges in the same $3$-path are consecutive in the Euler tour (as opposed to what was done in Section \ref{Section CompleteGraphs} where we forbade two edges in the same $3$-path to be consecutive in the Hamiltonian cycle considered in $K_{n}$).

\begin{theorem}\label{randomeulerianPMH}
Let $G$ be a graph of even size. If $G$ is arbitrarily traceable from some vertex, then its line graph is PMH.
\end{theorem}

\begin{proof}
Let $M$ be a perfect matching of $L(G)$. Consider the $P_3$-decomposition of $G$ induced by $M$. Since $G$ is arbitrarily traceable from some vertex, there exists an Euler tour in which every pair of edges in the same $3$-path are consecutive. The sequence of edges in this Euler tour corresponds to a sequence of vertices in $L(G)$ which gives a Hamiltonian cyle $H$ of $L(G)$, and since the two edges of each 3-path in the $P_{3}$-decomposition are consecutive in the Euler tour, $H$ contains all the edges of $M$, as required.
\end{proof}

\section{Concluding remark}
Along the paper, we have proposed several sufficient conditions of different types for a graph in order to guarantee the PMH-property in its line graph. 
The wide variety of such conditions, ranging between sparse and dense graphs, do not allow us to easily identify non-trivial necessary conditions to this problem. This could be seemingly hard, but we still consider it an intriguing problem to be addressed in the future.



\end{document}